%
%
%
%
%






\documentclass{amsart}

\usepackage{amsthm,amssymb,verbatim}
\usepackage{graphicx}
\usepackage{enumerate}


\newcommand{\vv}{\bold v}
\newcommand{\ee}{\bold e}
\newcommand{\uu}{\bold u}

 \newcommand{\RR}{\mathbf{R}}  
 \newcommand{\dist}{\operatorname{dist}}
 
 \newcommand{\area}{\operatorname{area}}
 \newcommand{\eps}{\epsilon}
 \newcommand{\Tan}{\operatorname{Tan}}
 

 \newcommand{\ddt}{\frac{d}{dt}}
  \newcommand{\Ddt}{\left(\frac{d}{dt}\right)_{(t=0)}}

 \newcommand{\trace}{\operatorname{trace}}

\newcommand{\spt}{\operatorname{spt}}

\input epsf
\def\begfig {
\begin{figure}
\small }
\def\endfig {
\normalsize
\end{figure}
}


    \newtheorem{theorem}    {Theorem}    

    \theoremstyle{definition}

    \theoremstyle{definition}

\usepackage{hyperref}
\usepackage[alphabetic]{amsrefs}

%
%

\begin{document}

\renewcommand{\thesubsection}{\thetheorem}

\title[Maximum Principle]{The Maximum Principle for Minimal \\ Varieties of
    Arbitrary Codimension}
\author{Brian White}
\thanks{This research was supported by the NSF 
  under grant~DMS-0707126}
\begin{abstract} 
We prove  that
an $m$-dimensional minimal variety in a Riemannian
manifold cannot touch the boundary at a point where 
the sum of the smallest $m$ principal curvatures is greater than $0$.
We prove a stronger maximum principle in case the variety is a hypersurface.
We also prove analogous results for varieties with bounded mean curvature.

\end{abstract}
\email{white@math.stanford.edu}
\date{June 9, 2009. Revised November 11, 2010.}


\subjclass[2000]{Primary: 49Q20; Secondary: 49Q05}
\keywords{maximum principle, barrier, varifold, minimal surface, bounded mean curvature}

\maketitle


Let $N$ be a smooth Riemannian manifold with boundary.  
In general, $N$ need
not be complete.   
Suppose $X$ is a compactly supported $C^1$ tangent vectorfield on $N$ such that
\begin{equation}\label{into}
  X\cdot \nu_N \ge 0
\end{equation}
at all points of $\partial N$, where $\nu_N$ is the unit normal to $\partial N$
that points into $N$.
Then $X$ generates a one-parameter family $t\in [0,\infty)\mapsto \phi_t$ of
maps of $N$ into itself such that $\phi_0$ is the identity map and such that
\[
   \ddt \phi_t(\cdot) = X(\phi_t(\cdot)).
\]
If $V$ is a $C^1$ submanifold of $N$ with finite area, we let $\delta V(X)$ denote
the first variation of area of $V$ with respect to $X$:
\[
  \delta V(X) = \Ddt \area(\phi_t(V)).
\]
More generally, if $V$ has locally finite area, we let
\[
  \delta V(X) = \Ddt \area(\phi_t(V\cap W))
\]
where $W$ is any open subset of $N$ that contains the support of $X$ and that
has compact closure.
Even more generally, $V$ can be any varifold in $N$.  (The theorems in this paper
are stated for arbitrary varifolds, but readers unfamiliar with varifolds may subsititute
``$C^1$ submanifold''  for ``varifold''  throughout the paper.
The appendix to~\cite{WhiteIsoperimetric} is a brief introduction to varifolds that
contains most of what is needed here.  For a more thorough treatment, 
see~\cite{SimonBook} or~\cite{AllardFirstVariation}.)

We say that a varifold $V$ in $N$ {\em minimizes area to first order} in $N$
provided
\[
   \delta V(X) \ge 0
\]
for every  compactly supported $C^1$ tangent vectorfield $X$ on $N$ satisfying~\eqref{into}.
In particular, any smooth minimal submanifold of $N$ or, more generally, any stationary varifold
in $N$ minimizes area to first order in $N$.

We say that $N$ is {\em strongly $m$-convex} at a point $p\in \partial N$ provided
\[
   \kappa_1 + \kappa_2 + \dots + \kappa_m  > 0
\]
where $\kappa_1\le \kappa_2 \le \dots \le \kappa_{n-1}$ are the principal curvatures of $\partial N$
at $p$ with respect to the unit normal $\nu_N$ that points into $N$.  We say that $N$ is $m$-convex
at $p$ provided $\kappa_1 + \kappa_2 + \dots + \kappa_m\ge 0$.

\begin{theorem}[Maximum Principle for Minimal Varifolds]\label{main}
Let $N$ be a smooth Riemannian manifold with boundary, and let $p$ be a point in $\partial N$
at which $N$ is strongly $m$-convex.
Then $p$ is not contained in the support of any $m$-dimensional varifold in $N$ that
minimizes area to first order in $N$.
Indeed, there is an $\eps>0$ such that
\[
   \dist(p, \spt V) \ge \eps
\]
for all such varifolds $V$.
\end{theorem}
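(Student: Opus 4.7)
The plan is to assume $p\in\spt V$ and obtain a contradiction by producing a compactly supported $C^1$ tangent vector field $X$ on $N$ with $X\cdot\nu_N\ge 0$ on $\partial N$ but $\delta V(X)<0$.

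The central object is a local barrier function adapted to the strong $m$-convexity. In coordinates $(x',x_n)$ at $p$ with $p=0$, $\nu_N(p)=e_n$, and $\partial N=\{x_n=h(x')\}$ where $h(0)=0$, $\nabla h(0)=0$, and $D^2h(0)=\operatorname{diag}(\kappa_1,\ldots,\kappa_{n-1})$ in principal coordinates (so $\kappa_1\le\cdots\le\kappa_{n-1}$ are the principal curvatures with respect to $\nu_N$), take a small $\eta>0$ and a large $B>0$ and define
\[
u(x) = \sum_{i=1}^{n-1}(\kappa_i+\eta)\,x_i^2 \;-\; 2\,x_n \;+\; B\,x_n^2.
\]
Direct computation gives, on a neighborhood $U$ of $p$: (i) $u|_{\partial N}=\eta|x'|^2+O(|x'|^3)$, so $u(p)=0$ and $u>0$ on $\partial N\cap U\setminus\{p\}$; (ii) $\nabla u\cdot\nu_N=-2+O(|x|)$ along $\partial N$, hence $\le-c_1<0$ on $\partial N\cap U$; (iii) when $B>\kappa_{n-1}+\eta$ the $m$ smallest eigenvalues of $\operatorname{Hess}(u)$ are the $2(\kappa_i+\eta)$ for $i\le m$, so their sum is $2(\kappa_1+\cdots+\kappa_m)+2m\eta>0$ by strong $m$-convexity; by continuity, $\operatorname{tr}_P\operatorname{Hess}(u)\ge c_0>0$ for every $m$-plane $P$ at every point of $U$.

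Pick a non-increasing smooth cutoff $\chi\colon\RR\to[0,1]$ with $\chi\equiv 1$ on $(-\infty,0]$ and $\chi\equiv 0$ on $[\delta,\infty)$, with $\delta$ small enough (and $B$ large enough) that the relevant part of $\{u\le\delta\}\cap N$ is a compact subset of $U$ (after intersecting with a small coordinate ball if necessary). Set $X=-\chi(u)\,\nabla u$; then $X$ is compactly supported in $N$, and $X\cdot\nu_N=-\chi(u)(\nabla u\cdot\nu_N)\ge 0$ on $\partial N$. The product rule yields
\[
\delta V(X)=-\int\chi(u)\,\operatorname{tr}_{T_V}\operatorname{Hess}(u)\,dV \;-\; \int\chi'(u)\,|\nabla^V u|^2\,dV.
\]
By~(iii) the first term is at most $-c_0\!\int\chi(u)\,dV$, which is strictly negative since $\chi(u(p))=1$ and $p\in\spt V$.

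The main difficulty is that the second term is non-negative ($\chi'\le 0$, $|\nabla^V u|^2\ge 0$) and could a priori cancel the good first term. The plan here is to exploit that $\nabla u(p)=-2e_n$ is parallel to $\nu_N(p)$: since $\spt V\subset N$ and $p\in\partial N$, any tangent varifold of $V$ at $p$ is supported in the half-space $T_pN$, and the only $m$-planes through the origin that lie in a closed half-space are those inside the bounding hyperplane $T_p\partial N$, so every tangent plane of such a tangent varifold is perpendicular to $\nabla u(p)$. Therefore $|\nabla^V u|$ vanishes at $p$ and is small nearby. Quantifying this by rescaling $V$ at $p$ and passing to a tangent varifold in the half-space $T_pN$—for which the tangent-plane restriction is exact—shows that on a small enough scale the second integral is strictly dominated by the first, giving $\delta V(X)<0$ in contradiction with the minimizing hypothesis. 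The uniform estimate $\dist(p,\spt V)\ge\eps$ then follows from the stability of the construction under small perturbations of the base point together with varifold compactness.
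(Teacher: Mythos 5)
Your overall strategy --- build a barrier function $u$ adapted to the strong $m$-convexity, take $X$ to be a nonnegative profile of $u$ times $\nabla u$, and estimate $\delta V(X)$ by the $m$-trace of $\nabla X$ --- is the same in spirit as the paper's. However, the two specific difficulties you run into are both genuine gaps, and the paper's construction is designed precisely to avoid them rather than to fight through them.

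\textbf{Compactness of support.} With $u(x)=\sum(\kappa_i+\eta)x_i^2-2x_n+Bx_n^2$, the set $\{u\le\delta\}\cap N$ is \emph{not} contained in a small neighborhood of $p$: already $\{u\le 0\}\cap N$ is the full region above the paraboloid $\{u=0\}$ out to where $Bx_n^2$ takes over, and if some $\kappa_i+\eta<0$ it is unbounded in the corresponding $x_i$ direction. So $X=-\chi(u)\nabla u$ does not vanish outside a small coordinate ball, and ``intersecting with a small coordinate ball if necessary'' means multiplying by a second cutoff whose derivative reintroduces uncontrolled terms in $\delta V(X)$. The paper sidesteps this by taking $u$ to be the signed distance to a hypersurface $\Sigma$ that makes \emph{fourth-order} contact with $\partial N$ at $p$ from outside $N$; then $u\ge 0$ on $N$ with equality only at $p$, and $N\cap\{u\le\eps\}$ is a compact lens shrinking to $\{p\}$ as $\eps\to0$, which gives compact support for free.

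\textbf{The bad normal term.} You correctly identify $-\int\chi'(u)\,|\nabla^V u|^2\,d\mu_V\ge 0$ as the obstruction, but the proposed fix via tangent varifolds at $p$ is not a proof. The integrand is supported on the thin shell $\{0<u<\delta\}\cap N$, which is not a blow-up window about $p$ at a single scale, and $|\chi'|\sim 1/\delta$ there can be arbitrarily large; there is no a priori relation between $\delta$ and the scale at which a tangent varifold at $p$ becomes a good approximation, and tangent varifolds need not be unique in any case. Even if the argument could be pushed through for one fixed $V$, the scale $\delta$ would depend on $V$, destroying the uniform $\eps$ in the conclusion. The paper eliminates this term \emph{algebraically} rather than analytically: because $u$ is a distance function, $|\nabla u|\equiv 1$ and $\nabla u$ is a null eigenvector of $\operatorname{Hess}u$, so the bilinear form $Q(\uu,\vv)=\uu\cdot\nabla_\vv X$ (with $X=\phi(u)\nabla u$) is diagonal in the basis of level-set principal directions together with $\nabla u$, with eigenvalues $-\phi(u)k_i$ and $\phi'(u)$. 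Then the profile $\phi(t)=\exp\!\bigl(1/(t-\eps)\bigr)$ is chosen so that $\phi'\le-K\phi$, which forces $\phi'(u)$ to be the \emph{smallest} eigenvalue; the top-$m$ trace is therefore exactly $-\phi(u)(k_1+\cdots+k_m)\le-\eta\,|X|$, and the normal-direction contribution never appears. That choice of $\phi$ is the key idea missing from your proposal, and it simultaneously makes $X$ work for every $V$ at once, giving the uniform $\eps$.

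A smaller point: your Hessian computations are carried out as if the metric were Euclidean; in a curved ambient the covariant Hessian picks up Christoffel terms, which is another reason the paper reduces everything to the intrinsic geometry of the level sets of a distance function rather than to a coordinate polynomial.
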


In case $V$ is a smooth minimal submanifold, the fact that $V$ cannot contain $p$
was proved by Jorge and Tomi~\cite{JorgeTomi}.   Indeed, they proved that if $N$ is $m$-convex 
  (not necessarily strongly $m$-convex)
at all boundary points, then a smooth, connected minimal $m$-dimensional submanifold
cannot touch $\partial N$ unless it lies entirely in $\partial N$.
An analogous result for smooth submanifolds $V$ of bounded mean curvature was proved by Dierkes and Schwab~\cite{DierkesSchwab}
assuming (in addition to the appropriate condition on $\partial N$) that $N$ is flat.
The proofs here are similar to the proofs in those papers.  
The results here are stronger in that they apply to nonsmooth varieties (varifolds), and in that
the varieties are not assumed to be stationary with respect to all variations, but only with respect
to variations that take $N$ into itself (i.e., that satisfy~\eqref{into}).  This extra generality
is important because when one solves variational problems for area in a manifold-with-boundary $N$, 
the solution surfaces need not be stationary (or minimal), but rather only stationary 
with respect to variations satisfying~\eqref{into}.  Likewise, even if a solution surface turns out to be a
differentiable manifold, it need not be smooth -- it may only be $C^{1,1}$.  Of course if the boundary
of $N$ is strictly $m$-convex, then according to Theorem~\ref{main}, the solution surface does lie in the interior
of $N$, which implies that it is stationary with respect to all variations, and that if it is a differentiable
manifold, then it is a smooth submanifold.   But it is important that these
properties of the surface are conclusions of Theorem~\ref{main}
rather than hypotheses.

Theorem~\ref{main} is proved by constructing a suitable test vectorfield $X$:

\begin{theorem}\label{comparison}
Let $N$ be a smooth Riemannian manifold with boundary.
Let $p$ be a point in $\partial N$ and let $\eta < \kappa_1 + \dots + \kappa_m$,
where $\kappa_1 \le \dots \le \kappa_{n-1}$ are the principal curvatures
of $\partial N$ at $p$ with respect to the unit normal~$\nu_N$ that points into $N$.
Then there is a compactly supported $C^\infty$ vectorfield $X$ on $N$ 
such that $X(p)$ is a nonzero normal to $\partial N$, such that
\begin{equation}
   \text{ $X \cdot \nu_N \ge 0$ at all points of $\partial N$,} 
\end{equation}
and such that
\begin{equation}\label{h-integral}
  \delta V(X) \le - \eta \int |X|\, d\mu_V
\end{equation}
for every $m$-dimensional varifold $V$ in $N$.
\end{theorem}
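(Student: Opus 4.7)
The first variation formula gives $\delta V(X) = \int \operatorname{div}_P X\, dV(x,P)$, so the integral inequality \eqref{h-integral} is equivalent to the pointwise bound $\operatorname{div}_P X \le -\eta |X|$ at every point of $N$ and every $m$-plane $P \subset T_q N$. I take $X = \nabla F$ for a smooth compactly supported function $F$ concentrated near $p$, reducing the task to $\operatorname{tr}(\operatorname{Hess} F|_P) \le -\eta |\nabla F|$ pointwise.

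Work in Fermi coordinates $(y_1, \ldots, y_{n-1}, d)$ centered at $p$, with $d$ the signed distance from $\partial N$ (positive inside $N$) and the $y$'s geodesic normal coordinates on $\partial N$ at $p$ that diagonalize the second fundamental form of $\partial N$ with eigenvalues $\kappa_1, \ldots, \kappa_{n-1}$. In these coordinates the only Christoffel symbol at $p$ relevant below is $\Gamma^d_{ij}(p) = \kappa_i \delta_{ij}$, the others vanishing at $p$. Try the product ansatz $F(y,d) = \phi(d)\,\psi(|y|^2)$, with $\phi \ge 0$ smooth, supported in $[0,\delta]$, satisfying $\phi(0)=0$, $\phi'(0)=1$, $\phi''(0)=-M$ for a large parameter $M$, and $\psi \ge 0$ smooth, supported in $[0,R^2]$, $\psi(0)=1$ and strictly decreasing. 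Then $F$ is smooth and compactly supported, $\nabla F(p) = \nu_N(p)$ is the required nonzero inward normal, and $(\nabla F \cdot \nu_N)|_{\partial N} = \phi'(0)\psi(|y|^2) \ge 0$.

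The crucial computation is of $\operatorname{Hess} F$ at $p$. Because $\phi(0)=0$, every coordinate second derivative of $F$ at $p$ containing a factor of $\phi$ vanishes, leaving only $\partial_d^2 F(p) = -M$; and since only $\partial_d F(p)$ is nonzero, the Christoffel correction $-\Gamma^c_{ab}\partial_c F(p)$ reduces to $-\Gamma^d_{ab}(p)$, contributing $-\kappa_i \delta_{ij}$ to the tangential block. Thus $\operatorname{Hess} F(p)$ is diagonal in the frame $(\partial_{y_i}, \partial_d)$ with eigenvalues $(-\kappa_1, \ldots, -\kappa_{n-1}, -M)$ and $|\nabla F(p)| = 1$. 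Choosing $M > \kappa_{n-1}$, the eigenvalue $-M$ falls strictly below every $-\kappa_i$, so for any $m$-plane $P$ the sum of the $m$ largest eigenvalues of $\operatorname{Hess} F(p)$ equals $-(\kappa_1 + \cdots + \kappa_m)$; by hypothesis this is strictly less than $-\eta = -\eta|\nabla F(p)|$, with a definite strict margin uniformly in $P$.

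\textbf{Main obstacle.} The delicate step is propagating this pointwise inequality from $p$ to the rest of the support of $F$. Near $p$ continuity does the job, at the cost of consuming part of the strict margin. At interior critical points of $F$ the product form and strict monotonicity of $\psi$ force the critical set onto the axis $y=0$; at such a point $(0, d_*)$ the Christoffel correction drops (since $\nabla F = 0$) and a direct computation shows $\operatorname{Hess} F$ is diagonal with entries $2\phi(d_*)\psi'(0)$ (tangential) and $\phi''(d_*)$ (normal), both nonpositive, so the inequality $\operatorname{tr}(\operatorname{Hess} F|_P) \le 0$ required when $\nabla F = 0$ is satisfied. The harder case is at noncritical interior points away from $p$, where the Fermi-metric cross terms and the tangential variation of $\psi(|y|^2)$ contribute positively to $\operatorname{Hess} F$; absorbing them forces $M$ to be taken large and then $\delta, R$ small (relative to $M$ and the curvature data) so that the Hessian structure at $p$ remains dominant throughout the support. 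Engineering $\phi, \psi, M, \delta, R$ so that all these estimates close simultaneously, while keeping $F$ genuinely $C^\infty$ with compact support, is the technical crux of the argument.
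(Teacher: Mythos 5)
Your reduction to the pointwise inequality $\trace(\operatorname{Hess}F\vert_P)\le -\eta|\nabla F|$ is exactly right, and your diagonalization of $\operatorname{Hess}F(p)$ in Fermi coordinates is correct. But the proposal stops where the real work begins: the final paragraph openly concedes that propagating the inequality from $p$ to the rest of $\spt F$ — controlling the mixed terms $\partial_i\partial_d F = 2\phi'(d)\psi'(|y|^2)y_i$, the positive tangential contributions from $\psi''$ in the tail of $\psi$, and the regions where $|\nabla F|$ degenerates — is "the technical crux," and no estimates are given. That is not a minor loose end; it is the heart of the theorem, and it is far from obvious that the product ansatz $F=\phi(d)\psi(|y|^2)$ can be engineered to close. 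In particular, near the lateral edge of the support, $|\nabla F|$ is small while $\psi''>0$ forces a positive radial Hessian entry, and at points where $\phi'(d)\ne 0$ the off-diagonal block is uncontrolled by the choice of $M$; you would have to run a genuine competition between these terms, which is not sketched.

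The paper avoids this problem entirely by a different construction. Instead of a product cutoff, it takes $u$ to be the signed distance to a hypersurface $\Sigma$ tangent to $\partial N$ to second order at $p$ but bowing quartically out of $N$ (so $\{u\le\eps\}\cap N$ is compact), and sets $X=\phi(u)\nabla u$ with $\phi$ an exponential bump satisfying $\phi'\le -\eps^{-2}\phi$. Because $|\nabla u|\equiv 1$ and $\nabla_{\nabla u}\nabla u=0$, the bilinear form $Q(\uu,\vv)=\uu\cdot\nabla_\vv X$ is \emph{exactly} diagonal in the frame of principal directions of the level set $\Sigma_q$ together with $\nu=\nabla u$, with eigenvalues $-\phi(u)k_i(q)$ and $\phi'(u)$, at \emph{every} point $q$ — there are no cross terms to absorb. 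Choosing $\eps\le K^{-1/2}$ (where $K$ bounds $|k_i|$) ensures $\phi'(u)\le-K\phi(u)$ is always the smallest eigenvalue, so the top $m$ eigenvalues sum to $-\phi(u)(k_1+\dots+k_m)\le-\eta\phi(u)=-\eta|X|$, with the inequality $k_1+\dots+k_m>\eta$ holding on a whole neighborhood by continuity and the second-order tangency of $\Sigma$ with $\partial N$. In short, the paper's one-variable cutoff along the level sets of a single carefully chosen distance function makes the global pointwise bound a linear-algebra triviality, precisely the step your product ansatz leaves open.
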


Here $\mu_V$ is the weight measure associated to $V$.  (If $V$ is a $C^1$ submanifold,
then the integration in~\eqref{h-integral} is 
simply integration over $V$ with respect to $m$-dimensional area.)

We remark that $X$ can be chosen so that its support is contained in an arbitrarily small 
neighborhood of  $p$ and so that the vectorfield $X/ |X|$ (wherever $X$ is nonzero) is arbitrarily 
$C^0$-close to $\nabla \dist(\cdot, \partial N)$.  (In the proof below, one simply chooses
$\eps$ sufficiently small.)

To see that Theorem~\ref{main}
follows from Theorem~\ref{comparison}, note that if $N$ is strongly $m$-convex at $p$,
 then we may choose the $\eta$ in Theorem~\ref{comparison} to be positive.
If $V$ minimizes area to first order in $N$, then by definition and by Theorem~\ref{comparison},
\[
   0 \le \delta V(X) \le - \eta \int |X|\, d\mu_V.
\]
Since $\eta >0$, this implies that $|X|$ vanishes $\mu_V$-almost everywhere  
and thus that the support of $V$ cannot contain any point where $X\ne  0$. 
Hence $\dist(p, \spt V)\ge \eps$, where $\eps$ is the
 distance from $p$ to the nearest point where $X$ vanishes.

\begin{proof}[Proof of Theorem~\ref{comparison}]
Given a compactly supported $C^1$ vectorfield $X$ on $N$, let 
$\Psi_X: N\to \RR$ be the function 
\[
  \Psi_X(x) = \max \left(  \trace(\nabla X \vert P) \right)
\]
where the maximum is over all $m$-dimensional linear subspaces $P$
of $\Tan_xN$ and where
\[
    \trace(\nabla X \vert P) = \sum_{i=1}^m \uu_i\cdot \nabla_{\uu_i}X
\]
for any orthonormal basis $\uu_1, \dots, \uu_m$ of $P$. 

If $V$ is an $m$-dimensional $C^1$ submanifold of $N$, then
by the first variation 
formula~\cite{SimonBook}*{\S9.3}\footnote{
Equation~\eqref{FirstVariationFormula1} is proved by expressing the area of $\phi_t(V)$ as the integral
of a Jacobian determinant and then differentiating under the integral sign.
If $V$ is a smooth submanifold, one can then integrate by parts
to express $\delta V(X)$ as $-\int X\cdot H\,d\mu_V$.}
\begin{align}\label{FirstVariationFormula1}
   \delta V(X) &= \int \trace(\nabla X \vert \Tan_x V) \, d\mu_Vx.
   \\
   &\le \int \Psi_X \, d\mu_V. \notag
\end{align}
More generally, if $V$ is any $m$-dimensional varifold in $N$, then
by the first variation formula~\cite{SimonBook}*{\S 39.2},
\begin{align*}
 \delta V(X)
 &=
 \int_{(x,P) \in G_m(N)} \trace(\nabla X \vert P) \, dV(x,P)  \\
 &\le
 \int_{(x,P)\in G_m(N)} \Psi_X(x)\, dV(x,P) \\
 &=
 \int \Psi_X \, d\mu_V.
\end{align*}
where $G_m(N)$ is the set of pairs $(x,P)$ such that $x\in N$ and $P$ is
an $m$-dimensional linear subspace of $\Tan_xN$.

Thus we see that the conclusion~\eqref{h-integral} of Theorem~\ref{comparison} will hold provided
\begin{equation}\label{PsiBound}
   \Psi_X(\,\cdot\,) \le - \eta\, |X(\,\cdot\,)|
\end{equation}
at all points of $N$.

To construct the desired vectorfield $X$, we
may assume that $N$ is part of a larger Riemannian manifold $\tilde N$  (without boundary)
of the same dimension\footnote{
If the existence of a such a $\tilde N$ is not clear, note that $p$ has a neighborhood diffeomorphic to a closed half-space in $\RR^n$.
Since Theorem~\ref{comparison} is local, we can assume that $N$ is that half-space with some smooth Riemannian
metric.  We can extend the Riemannian metric to all of $\RR^n$ and then let $\tilde N$ be $\RR^n$ with
the extended metric.}.
Let 
\[
   \Sigma = \{ q\in \tilde N:  \dist(x, N) = \dist(x,p)^4 \}.
\]
Note that $\Sigma$ and $\partial N$ make second order contact at $p$.
By replacing $\tilde N$ with a small geodesic ball around $p$, we may assume
that $\Sigma$ is a smooth hypersurface and that there is smooth, well-defined
nearest-point retraction from $\tilde N$ to  $\Sigma$.  (We will later replace $\tilde N$
by an even smaller ball to ensure that additional conditions are satisfied.)

For $x\in \tilde N$, let $u(x)$ be the signed distance from $x$ to $\Sigma$, with
the sign chosen so that $u$ is nonnegative on $N$.
For $q\in \tilde N$, let $\Sigma_q$ be the level set of $u$ that contains $q$.
Note that $\nu(q):=\nabla u(q)$ is a unit normal to $\Sigma_q$.
Let 
\begin{equation}\label{kappa-order}
   k_1(q)\le \dots \le k_{n-1}(q)
\end{equation}
 be the principal curvatures
of $\Sigma_q$ at $q$ with respect to the unit normal $\nu(q)$.

Note that 
\begin{equation}\label{convexity}
    k_1 + \dots + k_m > \eta
\end{equation}
at $p$ since $\Sigma$ and $\partial N$ make
second order contact at $p$.
By replacing $\tilde N$ with a sufficiently small ball around $p$, we may assume
that~\eqref{convexity} holds at all points of $\tilde N$, 
that
\begin{equation}\label{normals}
   \text{  $\nu\cdot \nu_N >0$ at all points of $\partial N$},
\end{equation}
and that the $|k_i|$ are uniformly bounded:
\begin{equation}\label{Kbound}
   | k_i(q)| \le K \qquad (q\in \tilde N, \, i\le n-1).
\end{equation}

Let  $\eps$ be a positive number (to be specified later), and define a vectorfield $X$
on $N$ by
\[
   X(\cdot) = \phi(u(\cdot)) \, \nu(\cdot)
\]
where 
\[
   \phi(t)
   =
   \begin{cases}
   \exp\left(\frac1{t - \eps} \right) &\text{if $0\le t < \eps$}, \\
   0 &\text{if $t\ge \eps$.}
   \end{cases}
\]
(We need not define $\phi(t)$ for $t<0$ since $u\ge 0$ on $N$.)
Note that 
\[
   \frac{\phi'(t)}{\phi(t)} = \frac{-1}{(t  -\eps)^2} \le  \frac{-1}{\eps^2}
\]
for $0 \le t < \eps$, and thus
\begin{equation*}
   \phi'(t) \le -\frac{1}{\eps^2} \,\phi(t)
\end{equation*}
for all $t\ge 0$.   Thus by choosing $\eps \le  K^{-1/2}$, we can ensure that
\begin{equation}\label{phi-bound}
  \phi'(t) \le - K \phi(t)
\end{equation}
for all $t\ge 0$.

We also choose $\eps$ small enough that $N\cap \{u\le \eps\}$ is compact.

We claim that the vectorfield $X$ has the desired 
 properties.
  First note that 
\[
   \spt X = \overline{N\cap \{u<\eps\}},
\]
which is compact by choice of $\eps$.
Also, 
\[
    X \cdot \nu_N = \phi(u) \, \nu\cdot \nu_N  \ge 0
\]
at all points of $\partial N$ by~\eqref{normals}, since $\phi$ is everywhere nonnegative.

It remains only to show that $\Psi_X\le -\eta \, |X|$.
Let $q$ be any point in $N$.  
Let $\ee_1, \dots, \ee_{n-1}$ be principal
directions in $\Tan_q\Sigma_q$ corresponding
to the principal curvatures $k_1(q), \dots, k_{n-1}(q)$.
Consider the bilinear form $Q$ on $\Tan_qN$ given
by
\[
  Q(\uu,\vv) = \uu \cdot \nabla_{\vv} X.
\]
We wish to calculate the matrix for $Q$ with respect to the orthonormal
basis $\ee_1, \dots, \ee_{n-1}, \nu$.

Note that if $\vv$ is tangent to $\Sigma_q$, then
\begin{equation}\label{tangent}
   \nabla_\vv X = \nabla_\vv (\phi(u) \nu) = \phi(u) \nabla_\vv \nu
\end{equation}
since $\phi(u)$ is constant on $\Sigma_q$.
Thus if $\uu$ and $\vv$ are both tangent to $\Sigma_q$, then
\[
   Q(\uu,\vv) = \phi(u) \uu\cdot \nabla_\vv \nu =  - \phi(u) B(\uu,\vv)
\]
where $B$ is the second fundamental form of $\Sigma_q$ with respect to
the normal $\nu$.
In particular,
\[
   Q(\ee_i, \ee_j) = 
   \begin{cases}
   - \phi(u) k_i &\text{if $i=j$}, \\
   0 &\text{if $i\ne j$}.
   \end{cases}
\]
Since $\|\nu\|\equiv 1$, we see that $\nabla_\vv\nu$ is perpendicular to $\nu$
and thus
\[
  Q(\nu, \ee_i) = 0
\]
by~\eqref{tangent}.
Since $\nu$ is the gradient of the distance function,
$
   \nabla_\nu \nu  = 0
$.
Thus
\begin{align*}
  \nabla_\nu X 
  &= \nabla_\nu (\phi(u)\nu)  \\
  &= \phi'(u) (\nabla_\nu u) \nu + \phi(u) \nabla_\nu\nu \\
  &= \phi'(u) \nu,
\end{align*}
so $Q(\nu,\nu)= \phi'(u)$ and $Q(\ee_i, \nu)=0$.

Hence we see that the matrix for $Q$ with respect to the orthonormal
basis $\ee_1,\dots, \ee_{n-1}, \nu$ is a diagonal matrix
with diagonal elements $-\phi(u) k_i$ (for $1 \le i \le  n-1$)
and $\phi'(u)$.  Note that
\[
   -\phi(u) k_1 \ge -\phi(u)k_2 \ge \dots \ge -\phi(u) k_{n-1} \ge - \phi(u)K \ge \phi'(u)
\]
by~\eqref{kappa-order}, \eqref{Kbound}, and~\eqref{phi-bound}, 
since $\phi\ge 0$.  In particular, since $\phi(u)=|X|$, the largest $m$ eigenvalues
of $Q$ are $ -|X(q)|\, k_i$ where $1\le i \le m$.   It follows by elementary linear
algebra that
\begin{align*}
   \Psi(q) 
   &= \max_P \trace(Q \vert P) \\
   &= - |X(q)| \, (k_1(q) + \dots + k_m(q)) \\
   &\le - \eta \, |X(q)|
\end{align*}
by~\eqref{convexity}.  This completes the proof.
\end{proof}

 \begin{theorem}[Maximum Principle for Set-Theoretic Limits of Minimal Varieties]\label{minimal-limits}
Suppose $N_i$ is a sequence of smooth Riemannian $n$-manifolds with boundary,
and suppose that the $N_i$ converge smoothly to a limit Riemannian manifold $N$.
Suppose for each $i$ that $V_i$ is an $m$-dimensional varifold in $N_i$ that minimizes area to first order in $N_i$, and
suppose that the sets $\spt(V_i)$ converge to a limit set $S \subset N$.
Then $S$ does not contain any point of $\partial N$ at which $N$ is strongly $m$-convex.
\end{theorem}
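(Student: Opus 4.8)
The plan is to apply Theorem~\ref{comparison} to the approximating manifolds $N_i$, but with all the constants occurring in its proof chosen \emph{uniformly} in $i$, and then to let $i\to\infty$. Fix a point $p\in\partial N$ at which $N$ is strongly $m$-convex; we must show $p\notin S$. Since the assertion is local near $p$, smooth convergence lets us identify a fixed neighborhood of $p$ --- a manifold $M$ with boundary, $p\in\partial M$ --- carrying Riemannian metrics $g_i$ (from $N_i$) with $g_i\to g$ in $C^\infty(M)$, where $g$ is the metric of $N$. The principal curvatures of $\partial M$ at $p$ depend continuously on the ambient metric, so $\kappa_1(g_i)+\dots+\kappa_m(g_i)\to\kappa_1(g)+\dots+\kappa_m(g)>0$; hence we may fix a single $\eta>0$ with $\eta<\kappa_1(g_i)+\dots+\kappa_m(g_i)$ for all large $i$.

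Next I would re-run the proof of Theorem~\ref{comparison} for each $(M,g_i)$ at the point $p$ with this $\eta$, making the auxiliary choices once for the whole sequence. Let $\Sigma\subset M$ be a hypersurface making second-order contact with $\partial M$ at $p$; since second-order contact is a metric-free notion, the same $\Sigma$ serves every $g_i$. Let $u_i$ be the signed $g_i$-distance to $\Sigma$, nonnegative on $M$. Then $u_i\to u$ in $C^\infty$ near $p$, so the $g_i$-principal curvatures $k_1^{(i)}\le\dots\le k_{n-1}^{(i)}$ of the level sets of $u_i$ converge uniformly on compact sets to those of $u$; at $p$ they equal $\kappa_j(g_i)$, so \eqref{convexity} holds at $p$ for all large $i$ and, by uniform continuity, on a fixed geodesic ball around $p$. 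For the same reasons \eqref{normals} holds with uniform slack and \eqref{Kbound} holds with a single constant $K$ on a fixed ball. Picking one $\eps>0$ with $\eps\le K^{-1/2}$ and small enough that $\{u_i\le\eps\}\cap M$ is compact for all large $i$, the construction in that proof goes through simultaneously for all large $i$ and produces a vectorfield $X_i=\phi(u_i)\,\nu_i$, $\nu_i=\nabla^{g_i}u_i$, with $X_i\cdot\nu_{g_i}\ge0$ on $\partial M$ and $\delta_{g_i}V(X_i)\le-\eta\int|X_i|\,d\mu_V$ for every $m$-dimensional varifold $V$ in $(M,g_i)$ --- the diagonalization of the form $Q$ in the proof of Theorem~\ref{comparison} uses only \eqref{kappa-order}, \eqref{Kbound}, \eqref{phi-bound}, and \eqref{convexity}. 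Since $\eta>0$ and $V_i$ minimizes area to first order in $N_i$, the deduction of Theorem~\ref{main} from Theorem~\ref{comparison} applies verbatim and gives $\dist_{g_i}(p,\spt V_i)\ge\eps$, with the \emph{same} $\eps$ for all large $i$.

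It remains to pass to the limit. The uniform bound means $\spt V_i$ avoids the $g_i$-ball of radius $\eps$ about $p$; as $g_i\to g$ near $p$, that ball contains the fixed $g$-ball $B_g(p,\eps/2)$ once $i$ is large, so $\spt V_i\cap B_g(p,\eps/2)=\emptyset$ for all large $i$. Since $B_g(p,\eps/2)$ is open and $S$ is the set-theoretic limit of the sets $\spt V_i$, it follows that $S\cap B_g(p,\eps/2)=\emptyset$; in particular $p\notin S$, as required. The step I expect to require genuine care is the uniformity in the middle paragraph: one must verify that every ingredient in the proof of Theorem~\ref{comparison} --- the radius on which $\Sigma$ and its nearest-point retraction are smooth, the bound $K$ in \eqref{Kbound}, and the slack in \eqref{convexity} and \eqref{normals} --- is stable under $C^\infty$-small perturbations of the ambient metric, so that $\eps$ may be chosen independent of $i$. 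This is routine given smooth convergence, but it is essential: without it the $\eps$ furnished by Theorem~\ref{main} could a priori shrink to $0$ along the sequence.
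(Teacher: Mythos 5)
Your proof is correct, and it reaches the same final conclusion, but via a route that is genuinely different from the paper's. You prove a \emph{uniform quantitative} version of Theorem~\ref{main}: you re-run the construction of Theorem~\ref{comparison} with every auxiliary choice ($\Sigma$, $\eta$, $K$, $\eps$) made once and for all, verify that each choice is stable under $C^\infty$-small perturbations of the metric, and thereby produce a single $\eps>0$ such that $\dist_{g_i}(p,\spt V_i)\ge\eps$ for all large $i$. The paper instead sidesteps uniformity of $\eps$ entirely: it fixes the function $u$ from the proof of Theorem~\ref{comparison}, observes that the four properties it needs of $u$ (positivity, compactness of $\{u\le\eps\}$, strong $m$-convexity of $C\cap\partial N$, and strong $m$-convexity of the level sets of $u$ on $C$) are open conditions that persist for $g(i)$ with $i$ large, and then runs a sliding/minimum-principle argument: if $\spt V_i$ met $C=\{u\le\eps\}$, the function $u$ restricted to the compact set $C\cap\spt V_i$ would attain a minimum at some $q$, and applying Theorem~\ref{main} at $q$ (either to $(N,g(i))$ if $q\in\partial N$, or to the manifold-with-boundary $\{u\ge u(q)\}$ if $q$ is interior) yields a contradiction. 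The paper's approach buys a cleaner argument that never tracks how $\eps$ depends on the metric; your approach buys an explicit uniform $\eps$, which is in some contexts a more useful statement, at the cost of checking (as you rightly flag) that every radius and bound in the construction of Theorem~\ref{comparison} is stable under smooth perturbation of the metric. Both reductions to Theorem~\ref{main} are sound, and your passage to the limit at the end is handled correctly.
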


\begin{proof}
Since the result is local, we may assume that the $N_i$ and $N$ are all the same as smooth manifolds 
but have Riemannian metrics $g(i)$ and $g$ where $g(i)$ converges smoothly to $g$.

Let $\kappa_1(\cdot) \le \dots \le \kappa_{n-1}(\cdot)$ be the principal curvatures of $\partial N$ with
respect to the inward pointing unit normal.

Let $p$ be a point of $\partial N$ at which $N$ is strongly $m$-convex (with respect to $g$.)
Let $0< \eta < \kappa_1(p) + \dots + \kappa_m(p)$.
In the proof of Theorem~\ref{comparison}, we constructed a smooth function $u: N \to \RR$ with the following
properties (with respect to the metric $g$):
\begin{enumerate}[\upshape (i)]
\item $u(p)=0$ and $u>0$ on $N\setminus \{p\}$.
\item The set $C=\{u\le \eps\}$ is compact.
\item\label{boundaryconvexity} $\kappa_1 + \dots + \kappa_m > \eta$ at all points of $C \cap \partial N$.
\item\label{levelsetconvexity} $\nabla u$ never vanishes on $C$, and 
\[
    k_1(q) + \dots + k_m(q) > \eta
\]
at each point $q\in C$, where $k_1(q) \le \dots \le k_{n-1}(q)$ are the principal curvatures
of the level set $\Sigma_q = \{ x: u(x)=u(q)\}$  with respect to the unit normal $\nabla u(q)$.
\end{enumerate}

By the smooth convergence $g(i)\to g$, these properties will also hold with respect to the metric $g(i)$ for
all sufficiently large $i$.  
Fix such an $i$.   We claim that $\spt V_i$ cannot contain any point of $C$.  For if it did, the function
$u$ restricted to $C\cap \spt V_i$ would attain a minimum at some point $q$.
By~\eqref{boundaryconvexity} and by Theorem~\ref{main}, $q$ cannot be in $\partial N$.
By~\eqref{levelsetconvexity},  the set $\{u\ge u(q)\}$ is strongly $m$-convex at $q$, which contradicts Theorem~\ref{main}
(since $q\in \spt V_i \subset \{u\ge u(q)\}$.)
Thus $C\cap \spt V_i$ is empty.   Since $p$ is in the interior of $C$,
we are done.
\end{proof}

In the case of hypersurfaces, we get a stronger result:

\begin{theorem}[Strong Maximum Principle for Minimal Hypersurfaces]\label{SolomonWhite} 
Suppose that $N$ is a smooth Riemannian manifold (not necessarily complete) with boundary, 
that $\partial N$ is connected, and 
that $N$ is mean convex, i.e., that
\[
   H \cdot \nu_{N} \ge 0
\]
on $\partial N$, where $H$ is the mean curvature vector of $\partial N$ and where $\nu_N$ is the 
unit normal to $\partial N$ that points into $N$.   Let $m=\dim(N)-1$, and suppose that $V$ is an $m$-dimensional varifold that 
minimizes area to first order in $N$.  
\begin{enumerate}
\item\label{strong1} If $\spt V$ contains any point of $\partial N$, then it must contain all of $\partial N$
and $H$ must vanish everywhere on $\partial N$.  
\item\label{strong2}
If $V$ is a stationary integral varifold, then $V$ can be written as $W+W'$ where the support of $W$ is $\partial N$
and the support of $W'$ is disjoint from $\partial N$.
\end{enumerate}
\end{theorem}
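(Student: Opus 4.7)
The plan is to prove part~(1) first, after which part~(2) follows from an integer-multiplicity decomposition.

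For part~(1), set $E := \spt V \cap \partial N$. Closedness of $E$ in $\partial N$ is immediate since $\spt V$ is closed. The strategy is to show $E$ is also \emph{open} in $\partial N$; by connectedness of $\partial N$ and the hypothesis $E \ne \emptyset$, this yields $E = \partial N$, i.e., $\partial N \subseteq \spt V$. The remaining conclusion $H \equiv 0$ on $\partial N$ is then forced by Theorem~\ref{main}: any $q \in \partial N$ with $H(q)\cdot\nu_N > 0$ would make $\partial N$ strongly $(n{-}1)$-convex at $q$, putting $q \notin \spt V$ and contradicting $E = \partial N$.

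The main obstacle is proving openness of $E$, since Theorem~\ref{main} requires \emph{strong} mean convexity but here $\partial N$ is only weakly mean convex. I would argue by contradiction: if openness fails at some $p \in E$, pick a sequence $q_i \in \partial N \setminus \spt V$ with $q_i \to p$. Each $q_i$ admits an ambient ball $B_i \subset \tilde N$ disjoint from $\spt V$. Inside $B_i$ one can push $\partial N$ slightly into $N$ via a compactly supported bump $f$, producing a perturbed boundary whose mean curvature increases at first-order rate $f\,|II|^2 - \Delta f$ relative to $\partial N$; choosing $f$ suitably (or, where $II$ degenerates, using a higher-order push as in the $\Sigma$ construction of Theorem~\ref{comparison}), the perturbed boundary becomes \emph{strongly} mean convex on the pushed region while coinciding with $\partial N$ outside. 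Because the perturbation is supported where $V$ has no mass, $V$ still minimizes area to first order in the modified domain $N^{(i)} \subset N$: any admissible test vectorfield on $N^{(i)}$ can be extended across the excised region, where $V$ is invisible, to an admissible vectorfield on $N$. The perturbed manifolds $N^{(i)}$ converge to $N$ with strongly mean convex regions accumulating at $p$, so Theorem~\ref{minimal-limits} forces $p \notin \spt V$, a contradiction.

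For part~(2), apply part~(1): either $\spt V \cap \partial N = \emptyset$ (in which case take $W = 0$, $W' = V$) or $\partial N \subseteq \spt V$ with $H \equiv 0$, making $[\partial N]$ itself a stationary integer rectifiable varifold. Define $k$ as the largest integer for which $W' := V - k[\partial N]$ is a nonnegative varifold; since $\Theta(V,\cdot)$ is integer-valued and upper semicontinuous, this $k$ equals the minimum of $\Theta(V,\cdot)$ over $\partial N$ and is attained. Then $W'$ is stationary and integer rectifiable --- hence minimizes area to first order --- and has density zero at some point of $\partial N$, so $\spt W' \not\supseteq \partial N$. Part~(1) applied to $W'$ therefore forces $\spt W' \cap \partial N = \emptyset$. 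Setting $W := k[\partial N]$ completes the decomposition.
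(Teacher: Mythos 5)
The proof of part~(2) is essentially identical to the paper's: the paper lets $d$ be the minimum density of $V$ on $\partial N$, sets $W$ to be $\partial N$ with multiplicity $d$, observes $W\le V$ as Grassman measures (since both are rectifiable), and applies part~(1) to the stationary integral varifold $W'=V-W$, which has density $0$ at a point of $\partial N$ and hence misses $\partial N$ entirely. Your $k$ is the paper's $d$, and the remaining steps match.

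For part~(1), the paper simply cites \cite{SolomonWhite}*{Step 1, p.\ 687} rather than giving a self-contained argument; you attempt one, and there is a genuine gap in it. Your plan is to push $\partial N$ inward inside balls $B_i$ around $q_i\in\partial N\setminus\spt V$, where $q_i\to p\in\spt V\cap\partial N$, and then invoke Theorem~\ref{minimal-limits}. But since $p\in\spt V$, the radii of the balls $B_i$ (which must avoid $\spt V$) necessarily shrink to $0$, so the perturbations $N^{(i)}$ converge back to $N$ itself, and the strongly mean convex bumps shrink away in the limit. Theorem~\ref{minimal-limits} then only lets you conclude that $S=\spt V$ misses points at which the \emph{limit} manifold $N$ is strongly $m$-convex -- and $N$ at $p$ is merely weakly mean convex. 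So the final sentence ``Theorem~\ref{minimal-limits} forces $p\notin\spt V$'' does not follow: the theorem gives no information at $p$. The bumps are in the wrong place (near $q_i$, where $\spt V$ already avoids $\partial N$) to say anything new about $p$. The standard way around this, and what Solomon--White actually do, is a Hopf-type sliding argument: construct a one-parameter family of hypersurfaces $\Gamma_t$ that equal $\partial N$ outside a fixed neighborhood of $q$, are strictly mean convex where they differ from $\partial N$, and push inward enough to reach past $p$; at the first $t$ for which $\Gamma_t$ touches $\spt V$ the touching must occur in the interior of $N$ (by choosing the family appropriately), and then the \emph{weak} maximum principle (Theorem~\ref{main}) applied locally at the interior touching point gives the contradiction. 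A secondary issue in your sketch: a compactly supported inward bump cannot have $f|II|^2-\Delta f>0$ everywhere on the pushed region -- near the edge of $\operatorname{supt} f$ where the bump returns to $\partial N$, the $-\Delta f$ term changes sign -- so even the local strong convexity claim needs more care (one typically keeps strong convexity only away from $\partial N$, which is exactly where the sliding argument arranges the first contact to occur).
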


\begin{proof}
Assertion~\eqref{strong1} was proved by Solomon and White~\cite{SolomonWhite}.
Assertion~\eqref{strong1} also follows rather directly from Theorem~\ref{main}: 
 see \cite{SolomonWhite}*{Step 1, p. 687} and the comments
at the end of~\cite{SolomonWhite}.  

To prove~\eqref{strong2}, 
we may assume that $\partial N$ is a minimal hypersurface. (Otherwise $\spt V$ is disjoint from $\partial N$ by 
assertion~\ref{strong1}, so we can let $W=0$ and $W'=V$.)
Let $d$ be the smallest integer such that there is a point $p\in \partial N$ at which the 
density of $V$ is $d$.  Let $W$ be the $m$-dimensional integral varifold whose support is $\partial N$ and whose
density is $d$ at every point of $\partial N$.
Then $\mu_W\le \mu_V$, so (since $V$ and $W$ are rectifiable varifolds) $W\le V$ (as measures
on the Grassman bundle.)   Thus the signed measure $W':=V-W$ is in fact a positive measure, i.e., a varifold.
Since $V$ and $W$ are stationary integral varifolds, so is $W'$.   By choice of $d$, the varifold $W'$ has
density $0$ at least one point $p$ of $\partial N$.  It follows that $p$ is not in the support of $W'$
(because the density is $\ge 1$ at every point in the support of a stationary integral varifold).  
But then by assertion~\eqref{strong1}, $\spt W'$ is disjoint from $\partial $N.
\end{proof}

Assertion~\eqref{strong2} need not hold if $V$ is not an integer-multiplicity varifold.   For example, let $N$ be
a closed half space in $\RR^3$, let $P_i$ ($i=1,2,3,\dots$) be a sequence of planes in the interior of $N$ that
converge to $\partial N$, let $V_i$ be the varifold corresponding to $P_i$ with multiplicity $2^{-i}$, and let $V$
be the sum of the $V_i$.

See~\cite{TomMax} and~\cite{Schaetzle} for other strong maximum principles for
varieties of codimension $1$.  In particular, \cite{TomMax} gives a very general strong maximum
principle for pairs of codimension $1$ minimal varieties, both of which may be singular.

\begin{theorem}[Maximum Principle for Varieties with Bounded Mean Curvature]\label{main-boundedversion}
Let $N$ be a smooth Riemannian manifold with boundary and $h$ be a nonnegative number.  Suppose $V$ is an $m$-dimensional 
varifold in $N$ and that
\begin{equation}\label{boundedmeancurvature}
   \delta V(X) +  h \int |X|\, d\mu_V \ge 0
\end{equation}
for every compactly supported $C^1$ vectorfield on $N$ such that
\begin{equation}\label{inward}
  \text{$X \cdot \nu_N \ge 0$ at all points of $\partial N$.}
\end{equation}
 Then the support of $V$ cannot contain any point $p$ in $\partial N$
at which 
\[
  \kappa_1 + \dots  + \kappa_m >   h
\]
where $\kappa_1\le \kappa_2 \le \dots \le \kappa_{n-1}$ are the principal curvatures of $\partial N$
with respect to the unit normal $\nu_N$ that points into $N$.

Indeed, there is an $\eps=\eps(h)$ such that $\dist(p, \spt V)\ge \eps$ for all $m$-varifolds $V$
satisfying~\eqref{boundedmeancurvature}.
\end{theorem}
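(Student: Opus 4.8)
The plan is to deduce this directly from Theorem~\ref{comparison}, exactly the way Theorem~\ref{main} was deduced from it, except that the extra term $h\int|X|\,d\mu_V$ in~\eqref{boundedmeancurvature} gets absorbed by strengthening the convexity requirement from $\kappa_1+\dots+\kappa_m>0$ to $\kappa_1+\dots+\kappa_m>h$. So suppose $p\in\partial N$ is a point at which $\kappa_1+\dots+\kappa_m>h$. First I would choose a number $\eta$ with $h<\eta<\kappa_1+\dots+\kappa_m$; this is possible precisely because the inequality at $p$ is strict. Then I would apply Theorem~\ref{comparison} at the point $p$ with this value of $\eta$ to obtain a compactly supported $C^\infty$ vectorfield $X$ on $N$ with $X(p)$ a nonzero normal to $\partial N$, with $X\cdot\nu_N\ge0$ at all points of $\partial N$, and with $\delta V(X)\le-\eta\int|X|\,d\mu_V$ for every $m$-dimensional varifold $V$ in $N$.

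Next, I would feed this $X$ into the hypothesis~\eqref{boundedmeancurvature}. Since $X$ is a compactly supported $C^1$ (indeed $C^\infty$) vectorfield satisfying~\eqref{inward}, any $V$ as in the theorem must satisfy $\delta V(X)+h\int|X|\,d\mu_V\ge0$. Combining this with the estimate from Theorem~\ref{comparison} gives
\[
   0 \le \delta V(X) + h\int|X|\,d\mu_V \le -\eta\int|X|\,d\mu_V + h\int|X|\,d\mu_V = -(\eta-h)\int|X|\,d\mu_V .
\]
Because $\eta-h>0$, this forces $\int|X|\,d\mu_V=0$, so $|X|$ vanishes $\mu_V$-almost everywhere, and hence $\spt V$ cannot contain any point where $X\ne0$. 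Taking $\eps=\eps(h)$ to be the distance from $p$ to the nearest zero of $X$ — by the remark following Theorem~\ref{comparison} this is just the radius $\eps$ used in the construction of $X$, which depends only on $h$ and the local geometry of $\partial N$ near $p$ — we conclude $\dist(p,\spt V)\ge\eps$ for every $V$ satisfying~\eqref{boundedmeancurvature}.

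There is essentially no analytic obstacle in this argument: all the real work has already been done in constructing $X$ in the proof of Theorem~\ref{comparison}, and the case $h=0$ simply recovers Theorem~\ref{main}. The only point that needs a word of care is the uniformity of $\eps$ over all admissible $V$, and this is immediate because $X$ (and hence its zero set) is chosen before $V$ enters the discussion, depending only on $N$, $p$, and $h$. One should also double-check the trivial point that the admissible class of test vectorfields in~\eqref{boundedmeancurvature} includes our $X$, i.e.\ that~\eqref{inward} is the same condition as~\eqref{into}; it is, so $X$ is a legitimate competitor.
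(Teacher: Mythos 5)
Your proof is correct and is exactly the argument the paper uses: it notes that Theorem~\ref{main-boundedversion} follows from Theorem~\ref{comparison} just as Theorem~\ref{main} did, by choosing $\eta$ strictly between $h$ and $\kappa_1(p)+\dots+\kappa_m(p)$ and then absorbing the extra term $h\int|X|\,d\mu_V$ against the $-\eta\int|X|\,d\mu_V$ bound.
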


To understand the meaning of the hypothesis on $V$, suppose that $V$ is a smooth $m$-dimenisonal
submanifold.   In that case, the inequality~\eqref{boundedmeancurvature} holds for all compactly supported
$C^1$ vectorfields if and only if the length of the mean curvature vector of $V$ is everywhere bounded by $h$.
The inequality holds for all $X$ satisfying~\eqref{inward} if and only if: (i) at every point in $V\setminus \partial N$,
the length of the mean curvature vector is at most $h$, and (ii) at every point $q\in V\cap \partial N$, the mean
curvature vector at $q$ can be written as the sum of a vector of length at most $h$ and a normal vector to $\partial N$
that points out of $N$.

Theorem~\ref{main-boundedversion} follows from Theorem~\ref{comparison} exactly as Theorem~\ref{main} did.  (One chooses the $\eta$
in Theorem~\ref{comparison} to be strictly between $h$ and $\kappa_1(p) + \dots + \kappa_m(p)$.)
Note that Theorem~\ref{main} is Theorem~\ref{main-boundedversion} in the special case $h=0$.

 \begin{theorem}[Maximum Principle for Set-Theoretic Limits of  Varieties with Bounded Mean Curvature]
Suppose that $N_i$ is a sequence of smooth Riemannian $n$-manifolds with boundary,
and suppose that the $N_i$ converge smoothly to a limit Riemannian manifold $N$.
Suppose for each $i$ that $V_i$ is an $m$-dimensional varifold in $N_i$ and that
\[
   \delta V_i(X) +  h \int |X|\, d\mu_{V_i} \ge 0  
\]
for every compactly supported $C^1$ vectorfield $X$ on $N_i$ 
such that
\[
  \text{ $X\cdot \nu_{N_i} \ge 0$ at all points of $\partial N_i$.}
\]
Suppose also that the sets $\spt(V_i)$ converge to a limit set $S \subset N$.
Then $S$ does not contain any point of $\partial N$ at which 
\[
    \kappa_1 + \dots + \kappa_m > h,
\]
where $\kappa_1 \le \kappa_2 \le \dots \le \kappa_{n-1}$ are the principal 
curvatures of $\partial N$ with respect to the unit normal that points into $N$.
\end{theorem}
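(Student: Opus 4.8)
The plan is to mimic the proof of Theorem~\ref{minimal-limits} essentially verbatim, substituting the bounded-mean-curvature maximum principle (Theorem~\ref{main-boundedversion}) for Theorem~\ref{main} at each step. Since the assertion is local, I would first reduce to the case where the $N_i$ and $N$ are all the same underlying smooth manifold but carry Riemannian metrics $g(i)$ converging smoothly to $g$. Fix a point $p\in\partial N$ with $\kappa_1(p)+\dots+\kappa_m(p)>h$ (principal curvatures with respect to $g$ and the inward normal), and choose $\eta$ with $h<\eta<\kappa_1(p)+\dots+\kappa_m(p)$; this is exactly the $\eta$ one feeds into Theorem~\ref{comparison} when deriving Theorem~\ref{main-boundedversion}, so the same construction produces a smooth function $u\colon N\to\RR$ and a number $\eps>0$ with the four properties (i)--(iv) listed in the proof of Theorem~\ref{minimal-limits}: $u(p)=0$ with $u>0$ elsewhere, $C=\{u\le\eps\}$ compact, $\kappa_1+\dots+\kappa_m>\eta$ on $C\cap\partial N$, and $\nabla u\ne0$ on $C$ with $k_1(q)+\dots+k_m(q)>\eta$ at every $q\in C$ for the level sets $\Sigma_q$.

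By the smooth convergence $g(i)\to g$, all of (i)--(iv) persist for the metric $g(i)$ once $i$ is large; fix such an $i$. The claim is that $\spt V_i$ cannot meet $C$. If it did, $u$ would attain a minimum on the compact set $C\cap\spt V_i$ at some point $q$. Because $\eta>h$ and $\kappa_1+\dots+\kappa_m>\eta>h$ on $C\cap\partial N$, property (iii) together with Theorem~\ref{main-boundedversion} rules out $q\in\partial N$ (the $\eps=\eps(h)$ quantitative distance bound from that theorem is what makes this robust). Hence $q$ lies in the interior, and then property (iv) says the region $\{u\ge u(q)\}$ has $\partial$-principal-curvature sum $>\eta>h$ at $q$ with respect to the inward normal; since $q\in\spt V_i\subset\{u\ge u(q)\}$ and $V_i$ satisfies the bounded-mean-curvature first-variation inequality \eqref{boundedmeancurvature} inside that region (the inequality for $X$ supported near $q$ with $X\cdot\nabla u\ge0$ on the boundary level set is a consequence of the hypothesis, just as in the minimal case), this again contradicts Theorem~\ref{main-boundedversion}. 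Therefore $C\cap\spt V_i=\emptyset$.

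Finally, since $\spt V_i$ avoids the fixed neighborhood $C$ of $p$ for all large $i$, and the sets $\spt V_i$ converge to $S$, the limit set $S$ cannot contain $p$ (indeed $\dist(p,S)\ge\eps$). As $p$ was an arbitrary point of $\partial N$ with $\kappa_1(p)+\dots+\kappa_m(p)>h$, this proves the theorem.

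I do not expect a genuine obstacle here: the one point requiring a word of care is the step showing that $V_i$, which a priori only satisfies \eqref{boundedmeancurvature} for vectorfields inward-pointing along $\partial N_i$, also satisfies the analogous inequality for the region $\{u\ge u(q)\}$ — but this is handled exactly as in the passage from the global hypothesis to the local application in the proof of Theorem~\ref{main} (one uses that near an interior minimum point $q$ of $u$, the relevant test vectorfields point into $\{u\ge u(q)\}$ and are supported away from $\partial N_i$, so they are admissible). All the analysis — the existence of $u$, the curvature estimates, the exponential cutoff — is inherited wholesale from the proof of Theorem~\ref{comparison}, and the only new ingredient over Theorem~\ref{minimal-limits} is bookkeeping the constant $h$ through the inequality $\eta>h$.
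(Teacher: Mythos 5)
Your proposal is correct and follows exactly the route the paper takes: the paper itself remarks that the proof is ``almost identical to the proof of Theorem~\ref{minimal-limits}'' with Theorem~\ref{main-boundedversion} replacing Theorem~\ref{main}, which is precisely what you have done, including the choice $h<\eta<\kappa_1(p)+\dots+\kappa_m(p)$. Your side remark about admissibility of the test vectorfield at the interior minimum point is also the right observation and matches the implicit reasoning in the paper's proof of Theorem~\ref{minimal-limits}.
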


The proof is almost identical to the proof of Theorem~\ref{minimal-limits}.

\begin{theorem}[Strong Maximum Principle for Hypersurfaces with Bounded Mean Curvature]\label{BoundedMCHypersurface}
Let $N$ be a smooth Riemannian manifold with boundary. Suppose that $\partial N$ is connected and that the mean
curvature of $\partial N$ with respect to the inward pointing normal is everywhere $\ge h$, where $h>0$.
Let $m=\dim(N)-1$ and suppose $V$ is an $m$-dimensional 
varifold in $N$ and that
\begin{equation*}
   \delta V(X) +  h \int |X|\, d\mu_V \ge 0
\end{equation*}
for every compactly supported $C^1$ vectorfield on $N$ such that
\begin{equation*}
  \text{$X \cdot \nu_N \ge 0$ at all points of $\partial N$.}
\end{equation*}
\begin{enumerate}
\item\label{strongI} If $\spt V$ contains any point of $\partial N$, then it must contain all of $\partial N$
and $\partial N$ must have constant mean curvature $h$.  
\item\label{strongII}
If $V$ is a rectifiable integral varifold with mean curvature $\le h$, 
then $V$ can be written as $W+W'$ where the support of $W$ is $\partial N$
and the support of $W'$ is disjoint from $\partial N$.
\end{enumerate}
\end{theorem}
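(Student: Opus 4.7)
The plan is to mirror the proof of Theorem~\ref{SolomonWhite}, with Theorem~\ref{main-boundedversion} playing the role of Theorem~\ref{main}. For assertion~\eqref{strongI}, suppose some point $p\in\partial N$ lies in $\spt V$. Theorem~\ref{main-boundedversion} forces $\kappa_1(p)+\dots+\kappa_m(p)\le h$, whereas the hypothesis gives $\kappa_1(p)+\dots+\kappa_m(p)\ge h$, so equality holds and the mean curvature of $\partial N$ at $p$ is exactly $h$. To propagate this to all of $\partial N$, I would reproduce the open-closed argument of Solomon--White~\cite{SolomonWhite}*{Step 1, p.\ 687}: the set $A:=\partial N\cap \spt V$ is closed in $\partial N$; openness at a point of $A$ follows by constructing, in a one-sided neighborhood of $\partial N$ inside $N$, a smooth foliation by hypersurfaces whose sum of smallest $m$ principal curvatures strictly exceeds $h$, and then applying Theorem~\ref{main-boundedversion} to rule out $\spt V$ reaching those leaves. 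Connectedness of $\partial N$ completes~\eqref{strongI}.

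For assertion~\eqref{strongII}, by~\eqref{strongI} we may assume $\partial N$ has constant mean curvature $h$ (otherwise $\spt V$ is disjoint from $\partial N$ and we take $W=0$, $W'=V$). Let $d$ be the smallest integer realized as the density of $V$ at some point of $\partial N$. A monotonicity argument for integral varifolds with bounded mean curvature shows that $d$ is positive and that the density of $V$ is $\ge d$ at every point of $\partial N$. Let $W$ be the integer-multiplicity varifold with support $\partial N$ and constant density $d$; then $W\le V$ as rectifiable varifolds and $W':=V-W$ is a nonnegative rectifiable integer-multiplicity varifold. The next step is to verify that $W'$ inherits the one-sided first-variation hypothesis of the theorem, after which $W'$ has density $0$ at the chosen point of $\partial N$, so that point cannot be in $\spt W'$ (densities of rectifiable integer-multiplicity varifolds with bounded mean curvature are $\ge 1$ on their support), and then applying~\eqref{strongI} to $W'$ yields $\spt W'\cap \partial N=\emptyset$.

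The main obstacle is the first-variation verification for $W'$. Unlike the stationary case of Theorem~\ref{SolomonWhite}, one cannot simply subtract the inequalities for $V$ and $W$, because $W$ itself fails to satisfy $\delta W(X)+h\int|X|\,d\mu_W\le 0$ for $X$ with $X\cdot \nu_N\ge 0$. To resolve this, I expect to need the sharper pointwise identification that the generalized mean curvature vector of $V$ equals $h\,\nu_N$ at every point of $\partial N\cap \spt V$, which comes from the equality case in the proof of Theorem~\ref{main-boundedversion} (tracing through the construction of the test vectorfield $X$ and the case of equality in~\eqref{h-integral}). Once this pointwise identity is in hand, the contributions of $\delta V$ and $\delta W$ along $\partial N$ cancel on the $\nu_N$-component of $X$, and the residual tangential contribution is manifestly nonnegative, giving the desired inequality for $W'$.
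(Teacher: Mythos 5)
Your treatment of assertion~(\ref{strongI}) is correct and matches what the paper has in mind: apply Theorem~\ref{main-boundedversion} to get $\kappa_1+\dots+\kappa_m\le h$ at any touching point, combine with the hypothesis $\ge h$ to force equality, and then run the Solomon--White open-closed argument along $\partial N$ using foliations of strictly $m$-convex barriers. This is exactly the intended adaptation.

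For assertion~(\ref{strongII}), you have correctly identified the genuine subtlety that the paper's terse ``the proof is similar'' glosses over. In the stationary case (Theorem~\ref{SolomonWhite}), $W'=V-W$ inherits stationarity by mere linearity of $\delta$, since $\delta V\equiv 0$ and $\delta W\equiv 0$. Here that fails: writing $\delta W'(X)+h\int|X|\,d\mu_{W'}=\bigl[\delta V(X)+h\int|X|\,d\mu_V\bigr]-\bigl[\delta W(X)+h\int|X|\,d\mu_W\bigr]$, the subtracted term equals $\int h\,(|X|-\nu_N\cdot X)\,d\mu_W\ge 0$ and is generically positive, so nonnegativity of $W'$'s first variation does not follow from nonnegativity of $V$'s. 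You diagnose this correctly.

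Where the proposal falls short is the proposed remedy. There is no usable ``equality case'' to trace in the proof of Theorem~\ref{main-boundedversion}: that proof chooses $\eta$ strictly between $h$ and $\kappa_1(p)+\dots+\kappa_m(p)$ and derives a strict inequality $(h-\eta)\int|X|\,d\mu_V\ge 0$, so it produces a dichotomy (touch or don't), not a pointwise identification of the generalized mean curvature. What you actually need is the statement that at $\mathcal H^m$-a.e.\ point of $\partial N$, the generalized mean curvature $H_V$ of $V$ equals the classical mean curvature $h\,\nu_N$ of $\partial N$. This is a \emph{locality} theorem for the generalized mean curvature of integral varifolds: where an integral varifold with $L^\infty$ mean curvature coincides (in measure) with a $C^2$ hypersurface, the normal component of $H_V$ agrees a.e.\ with the hypersurface's mean curvature, and by Brakke's perpendicularity theorem for integral varifolds $H_V$ is a.e.\ normal to the approximate tangent plane, so $H_V=h\,\nu_N$ a.e.\ on $\partial N$. (Schätzle's paper, already in the bibliography under \cite{Schaetzle}, is the natural reference for locality.) With that in hand, one checks directly that $W'$ is an integral varifold with $|H_{W'}|\le h$ ($H_{W'}=h\nu_N$ on $\partial N$, $H_{W'}=H_V$ elsewhere), so assertion~(\ref{strongI}) applies to $W'$ and the rest of your argument (density $0$ at the minimizing point, hence $p\notin\spt W'$, hence $\spt W'\cap\partial N=\emptyset$) goes through. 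In short: right decomposition, right target identity, wrong mechanism for establishing it.
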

 
The proof is similar to the proof of Theorem~\ref{SolomonWhite}, except that one uses 
Theorem~\ref{main-boundedversion} in place of Theorem~\ref{main}.

\newcommand{\hide}[1]{}

\begin{bibdiv}

\begin{biblist}

\bib{AllardFirstVariation}{article}{
  author={Allard, William K.},
  title={On the first variation of a varifold},
  journal={Ann. of Math. (2)},
  volume={95},
  date={1972},
  pages={417--491},
  issn={0003-486X},
  review={\MR {0307015},
  Zbl 0252.49028.}}
  \hide{(46 \#6136)}
  
  \bib{DierkesSchwab}{article}{
   author={Dierkes, Ulrich},
   author={Schwab, Dirk},
   title={Maximum principles for submanifolds of arbitrary codimension and
   bounded mean curvature},
   journal={Calc. Var. Partial Differential Equations},
   volume={22},
   date={2005},
   number={2},
   pages={173--184},
   issn={0944-2669},
   review={\MR{2106766.}},
}  \hide{ (2005h:35091)}

\bib{TomMax}{article}{
   author={Ilmanen, T.},
   title={A strong maximum principle for singular minimal hypersurfaces},
   journal={Calc. Var. Partial Differential Equations},
   volume={4},
   date={1996},
   number={5},
   pages={443--467},
   issn={0944-2669},
   review={\MR{1402732.}},
} \hide{(97g:49052)}

\bib{JorgeTomi}{article}{
   author={Jorge, Luqu{\'e}sio P.},
   author={Tomi, Friedrich},
   title={The barrier principle for minimal submanifolds of arbitrary
   codimension},
   journal={Ann. Global Anal. Geom.},
   volume={24},
   date={2003},
   number={3},
   pages={261--267},
   issn={0232-704X},
   review={\MR{1996769.}},
} \hide{(2004f:53063)}

\bib{Schaetzle}{article}{
   author={Sch{\"a}tzle, Reiner},
   title={Quadratic tilt-excess decay and strong maximum principle for
   varifolds},
   journal={Ann. Sc. Norm. Super. Pisa Cl. Sci. (5)},
   volume={3},
   date={2004},
   number={1},
   pages={171--231},
   issn={0391-173X},
   review={\MR{2064971.}},
} \hide{(2005e:49084)}

\bib{SimonBook}{book}{
  author={Simon, Leon},
  title={Lectures on geometric measure theory},
  series={Proceedings of the Centre for Mathematical Analysis, Australian National University},
  volume={3},
  publisher={Australian National University Centre for Mathematical Analysis},
  place={Canberra},
  date={1983},
  pages={vii+272},
  isbn={0-86784-429-9},
  review={\MR {756417},
  Zbl 0546.49019.}
}  \hide{ (87a:49001)}

\bib{SolomonWhite}{article}{
  author={Solomon, Bruce},
  author={White, Brian},
  title={A strong maximum principle for varifolds 
  that are stationary with respect to even parametric elliptic functionals},
  journal={Indiana Univ. Math. J.},
  volume={38},
  date={1989},
  number={3},
  pages={683--691},
  issn={0022-2518},
  review={\MR {1017330},
  Zbl 0711.49059.} 
}  \hide{ (90i:49052)}

\bib{WhiteIsoperimetric}{article}{
   author={White, Brian},
   title={Which ambient spaces admit isoperimetric inequalities for
   submanifolds?},
   journal={J. Differential Geom.},
   volume={83},
   date={2009},
   number={1},
   pages={213--228},
   issn={0022-040X},
   review={\MR{2545035}},
}

\end{biblist}

\end{bibdiv}		

\end{document}